\title {Gromov's macroscopic dimension conjecture}
\author{Dmitry V\,Bolotov}
\address {B\,Verkin Institute for Low Temperature Physics\\\newline 
Lenina ave 47\\
Kharkov 61103\\Ukraine}
\email {bolotov@univer.kharkov.ua}
\urladdr{}
\def\C1{\ifreffull Conjecture \fi}
\newtheorem{Cone}{C1}
\newtheorem{Ctwo}{C2}
\def\tildeM{\wwtilde M}
\def\tildeg{\tilde g}
\def\tildef{\tilde f}
\def\cnewtheorem#1[#2]#3{\newtheorem{#1}{#3}[section]
\expandafter\let\csname c@#1\endcsname\c@thm}
 \newtheorem{thm}{Theorem}[section]
 \theoremstyle{definition}
 \theoremstyle{remark}
 \newcommand{\eps}{\varepsilon}
 \newcommand{\Wi}{\widetilde}
\newcommand{\Diam}{ \text {Diam}}
\begin{document}

\begin{abstract}
In this note we construct a closed 4--manifold having
torsion-free fundamental group and whose universal covering is of
macroscopic dimension 3. This yields a counterexample to
Gromov's conjecture about the falling of macroscopic dimension.
\end{abstract}

\maketitle

\section{Introduction}
The following definition was given by  M\,Gromov \cite{G1}:

\begin{defn}
Let $V$ be a metric space. We say that $\dim_{\varepsilon} V \leq
k$ if there is a $k$--dimensional polyhedron $P$ and a proper
uniformly cobounded map $\phi\co  V \to P$  such that $\text
{Diam}({\ \phi^{-1}(p)})\leq \varepsilon$ for all $p \in P$. A
metric space $V$ has macroscopic $\dim_{\mc} V \leq k$ if
$\dim_{\varepsilon} V \leq k$ for some possibly large $\varepsilon
< \infty$. If $k$ is minimal, we say that
$\dim_{\mc} V = k$.
\end{defn}

Gromov also stated the following questions which, for 
convenience, we state in the form of conjectures:

\begin{Cone} \label{Cone} Let $(M^n,g)$ be a closed Riemannian $n$--manifold
with torsion-free fundamental group, and let $(\tildeM^n,\tildeg)$ be the universal covering of $M^n$ with the
pullback metric. Suppose that $\dim_{\mc} (\tildeM^n,\tildeg) <n$. Then $\dim_{\mc} (\tildeM^n,\tildeg) <n-1$.
\end{Cone}

In \cite{B} we proved \fullref{Cone} for the case $n=3$.

Evidently, the following conjecture would imply \fullref{Cone}  (see also \eqref{itemC} of \fullref{sec2}):

\begin{Ctwo}Let $M^n$ be a closed $n$--manifold with torsion-free fundamental group $\pi$ and let $f\co M^n\to B\pi$ be a classifying
map to the classifying space $B\pi$. Suppose that $f$ is homotopic
to a mapping into the $(n-1)$--skeleton of $B\pi$. Then $f$ is in fact
homotopic to a mapping into the $(n-2)$--skeleton of $B\pi$.
\end{Ctwo}

In this note we show that both conjectures fail for $n\geq 4$.

We always assume that universal covering are equipped with the
pullback metrics.

\subsection*{Framed cobordism, Pontryagin manifolds and classification of mappings to the sphere}

Let $M$ be a smooth compact manifold possibly with a boundary and let $(N,{ v})$ and $(N',{ w})$ be closed $n$--submanifolds in the interior of $M$ with trivial normal bundles and framings ${ v}$ and ${ w}$, respectively.

\begin{defn}
Two framed submanifolds  $(N,{ v})$ and $(N',{ w})$ are {\it framed cobordant\/} if there exists a cobordism $X\subset M \times[0,1]$ between $N$ and $N{'}$ and a framing $u$ of $X$ such that
$$\begin{array}{lll}
u(x,t)&=(v(x),0)\quad&\text{for }(x,t) \in N\times [0,\eps),\cr
u(x,t)&=(w(x),1)\quad&\text{for }(x,t) \in N'\times (1-\eps,1].
\end{array}$$
\end{defn}

\begin{rem}
If $(N',{ w})=\emptyset$ we  say  $(N,{ v})$ is {\it framed cobordant to zero\/}.
\end{rem}

Now let $f\co M\to S^p$ be a smooth mapping and $y\in S^p$ be a regular value of $f$.
Then $f$ induces the following framing of the submanifold $f^{-1}(y) \subset M$.
Choose a positively oriented basis $v=(v^1\dots,v^p)$ for the tangent space $T(S^p)_y$.
Notice that for each $x \in f^{-1}(y)$ the differential $df_x\co TM_x\to T(S^p)_y$ vanishes on the subspace $Tf^{-1}(y)_x$ and isomorphically maps its orthogonal complement $Tf^{-1}(y)_x^{\perp}$ onto $T(S^p)_y$.
Hence there exists a unique vector $$w^i\in Tf^{-1}(y)_x^{\perp} \subset TM_x$$ which is mapped by $df_x$ to $v^i$.
So we have an induced framing $w=f^*v$ of $f^{-1}(y)$.

\begin{defn}
This framed manifold $(f^{-1}(y),f^*v)$ will be called the {\it
Pontryagin manifold} associated with $f$.
\end{defn}


\begin{thm}{\rm (Milnor \cite{M})}\qua\label{th:1}
If $y'$ is another regular value of $f$ and $v{'}$ is a positively oriented basis for $T(S^p)_{y'}$, then the framed manifold $(f^{-1}(y'),f^*v{'})$ is framed cobordant to $(f^{-1}(y),f^*v)$.
\end{thm}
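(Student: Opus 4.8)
The plan is to carry out the classical Pontryagin--Thom argument, organised around a single homotopy lemma: \emph{if $g,h\co M\to S^p$ are smoothly homotopic, $y$ is a regular value of both, and $v$ is a positively oriented basis for $T(S^p)_y$, then $(g^{-1}(y),g^{*}v)$ and $(h^{-1}(y),h^{*}v)$ are framed cobordant}. To prove this lemma I would take a smooth homotopy $F\co M\times[0,1]\to S^p$ which, after rescaling the $[0,1]$--coordinate, equals $g$ on $M\times[0,\eps)$ and $h$ on $M\times(1-\eps,1]$, so that $y$ is automatically a regular value of $F$ on these two collars. By the relative form of Sard's theorem one perturbs $F$, keeping it unchanged on $M\times([0,\eps/2]\cup[1-\eps/2,1])$, so that $y$ becomes a regular value of the whole map. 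Then $X:=F^{-1}(y)\subset M\times[0,1]$ is a compact submanifold whose intersections with the two ends are $g^{-1}(y)$ and $h^{-1}(y)$, and the induced framing $F^{*}v$ agrees near the ends with $g^{*}v$ and $h^{*}v$; this is exactly a framed cobordism in the sense of the definition above.

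Next I would settle the dependence on the basis at a fixed regular value $y$. Given two positively oriented bases $v,v'$ of $T(S^p)_y$, choose a smooth path $t\mapsto A_t$ in the path-connected group $GL^{+}(p,\mathbb{R})$ from the identity to the change-of-basis matrix; applying $A_t$ fibrewise to $f^{*}v$ gives a smooth family of framings of $f^{-1}(y)$ running from $f^{*}v$ to $f^{*}v'$, so $f^{-1}(y)\times[0,1]$ carrying this family is a framed cobordism between $(f^{-1}(y),f^{*}v)$ and $(f^{-1}(y),f^{*}v')$.

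Finally I would treat the change of regular value. If $y'$ lies in a sufficiently small neighbourhood of $y$, there is a diffeomorphism $r\co S^p\to S^p$, smoothly isotopic to the identity, with $r(y)=y'$ --- obtained, for instance, by flowing for unit time along a compactly supported vector field concentrated near a short arc from $y$ to $y'$. Then $r\circ f$ is smoothly homotopic to $f$, the point $y'$ is a regular value of $r\circ f$, and $(r\circ f)^{-1}(y')=f^{-1}(y)$ with $(r\circ f)^{*}(dr_y v)=f^{*}v$, where $dr_y v$ is a positively oriented basis of $T(S^p)_{y'}$. By the homotopy lemma $(f^{-1}(y),f^{*}v)$ is framed cobordant to $(f^{-1}(y'),f^{*}(dr_y v))$, and by the previous paragraph the latter is framed cobordant to $(f^{-1}(y'),f^{*}v')$ for the prescribed basis $v'$. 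For an arbitrary regular value $y'$, join $y$ to $y'$ by a smooth path in $S^p$; by compactness the path meets finitely many regular values $y=y_0,\dots,y_N=y'$ with consecutive ones arbitrarily close, and chaining the corresponding framed cobordisms (by stacking cylinders $M\times[0,1]$) completes the proof.

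I expect the homotopy lemma to be the main obstacle: one must arrange that $y$ is a regular value of $F$ \emph{without} disturbing $F$ near the two ends, so that $X=F^{-1}(y)$ carries exactly the prescribed boundary submanifolds and boundary framing. This needs the relative transversality/Sard step together with a careful choice of collar neighbourhoods; the remaining ingredients --- path-connectedness of $GL^{+}(p,\mathbb{R})$ and a compactness chaining --- are routine.
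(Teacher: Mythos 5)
The paper does not prove this statement itself but cites it directly from Milnor's \emph{Topology from the Differentiable Viewpoint}, and your argument is precisely Milnor's: the homotopy lemma (framed cobordism $X=F^{-1}(y)$ after a relative transversality perturbation of the homotopy), the change-of-basis step via path-connectedness of $GL^{+}(p,\mathbb R)$, and the change-of-regular-value step via a rotation isotopic to the identity followed by a chaining/compactness argument. Your proof is correct and matches the cited source's approach.
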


\begin{thm}{\rm (Milnor \cite{M})}\qua\label{th:2}
Two mappings from $(M,\partial M)$ to $(S^p,s_0)$ are smooth\-ly
homotopic if and only if the associated Pontryagin manifolds are
framed cobordant.
\end{thm}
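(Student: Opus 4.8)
The plan is to prove both implications through the Pontryagin--Thom \emph{model construction}, which manufactures a map $M\to S^p$ out of a framed codimension-$p$ submanifold. The basic ingredient is the \textbf{product neighborhood statement}: a codimension-$p$ submanifold $N$ in the interior of $M$ together with a framing $\mathcal F=(w^1,\dots,w^p)$ admits a tubular neighborhood and a diffeomorphism $\phi\co N\times\Real^p\to V$ with $\phi|_{N\times 0}$ the inclusion of $N$ and with $d\phi$ carrying the standard basis of $\Real^p$ to $\mathcal F$ along $N\times 0$. Fix once and for all a smooth $\psi\co\Real^p\to S^p$ with $\psi(0)$ some point $y\neq s_0$, $\psi^{-1}(y)=\{0\}$, $d\psi_0$ an orientation-preserving isomorphism, and $\psi\equiv s_0$ outside the unit ball. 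Then the model map $g_{(N,\mathcal F)}\co M\to S^p$, equal to $\psi\circ\mathrm{pr}_{\Real^p}\circ\phi^{-1}$ on $V$ and to $s_0$ on $M\setminus V$, is smooth, and its Pontryagin manifold at the regular value $y$ is $(N,\mathcal F)$ up to the indeterminacy in the choice of regular value and positively oriented basis already absorbed by \fullref{th:1}.

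For the ``only if'' direction, given a smooth homotopy $F\co M\times[0,1]\to S^p$ of pairs from $f$ to $g$, I would first reparametrise in $t$ so that $F$ is independent of $t$ near $t=0$ and near $t=1$, then use Sard's theorem to choose $y\in S^p\setminus\{s_0\}$ which is simultaneously a regular value of $F$, of $f$ and of $g$. Then $X:=F^{-1}(y)$ is a smooth cobordism in $M\times[0,1]$ between $f^{-1}(y)$ and $g^{-1}(y)$, lying in the interior, and because $F$ is $t$-independent near the ends the induced framing $F^*v$ restricts to the product framings $(f^*v,0)$ and $(g^*v,1)$ there. Together with \fullref{th:1}, which permits replacing either Pontryagin manifold by the one coming from any other regular value and positively oriented basis, this gives the desired framed cobordism.

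For the converse I would isolate two lemmas. \textbf{Lemma A}: every smooth $f\co(M,\partial M)\to(S^p,s_0)$ with regular value $y$ is homotopic to $g_{(f^{-1}(y),\,f^*v)}$. \textbf{Lemma B}: framed cobordant submanifolds $(N,\mathcal F)$, $(N',\mathcal F')$ in $M$ have homotopic model maps. Lemma B is Lemma A promoted one dimension up: a framed cobordism $(X,u)\subset M\times[0,1]$ is already a product near the two ends (by the definition of framed cobordism), so the model construction applied to $(X,u)$ yields a smooth map $M\times[0,1]\to S^p$ which is a homotopy of pairs whose restrictions to $M\times\{0\}$ and $M\times\{1\}$ are the model maps of $(N,\mathcal F)$ and $(N',\mathcal F')$. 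Granting the lemmas, the converse follows: if the Pontryagin manifolds of $f$ and $g$ are framed cobordant, then Lemma B makes their model maps homotopic, while Lemma A identifies $f$ and $g$ with those model maps up to homotopy (the two positively oriented bases involved differ by a path in $\mathrm{GL}^+(p)$, which is connected, so the framings, hence the model maps, agree up to homotopy).

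The heart of the matter, and the step I expect to cost the most work, is Lemma A --- recovering $f$ up to homotopy from its Pontryagin data. It has two stages. \emph{Straightening near $N$}: since $df_x$ restricted to the normal space of $N=f^{-1}(y)$ is already an isomorphism onto $T(S^p)_y$ carrying $f^*v$ to $v$, the maps $f\circ\phi$ and $g_{(N,f^*v)}\circ\phi$ agree to first order along $N\times 0$; a straight-line homotopy between them, carried out inside a coordinate ball about $y$ and damped to the identity outside a small tube, deforms $f$ to a map agreeing with $g_{(N,f^*v)}$ on a neighborhood $V_1$ of $N$, and one checks the convex combinations never hit $y$ away from $N$, so no new preimages of $y$ are created. \emph{Contracting the complement}: after this, $f$ maps $M\setminus V_1$ into $S^p\setminus\{y\}$, which is diffeomorphic to $\Real^p$ and hence contractible, and $f$ equals $s_0$ along $\partial V_1$ and along $\partial M$; because the model map was chosen to collapse the collar to $s_0$, it now suffices to deform $f|_{M\setminus V_1}$ to the constant $s_0$ rel $\partial V_1\cup\partial M$, which the straight-line homotopy toward $s_0$ in $\Real^p$ accomplishes. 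Assembling the two stages gives $f\simeq g_{(f^{-1}(y),f^*v)}$, completing the proof; the delicate bookkeeping is entirely in keeping the straightening homotopy free of spurious $y$-preimages and smoothly matched across the collar.
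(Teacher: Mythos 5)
The paper does not prove \fullref{th:2}: it is quoted verbatim from Milnor's \emph{Topology from the Differentiable Viewpoint} and used as a black box. So the relevant comparison is with Milnor's proof, and your outline is essentially that proof: the model-map (Pontryagin--Thom) construction, the ``only if'' direction via a common regular value of a reparametrised homotopy, and the converse via the two lemmas (your Lemma A is Milnor's lemma that a map with prescribed Pontryagin data is homotopic to a standard representative, and your Lemma B is exactly how Milnor promotes a framed cobordism to a homotopy of the two standard representatives). The ``only if'' argument and Lemma B are stated correctly.

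There is, however, a real wobble in the second stage of your Lemma A. After the straightening you have a map $f'$ that agrees with the model map $g_{(N,f^*v)}$ only on a small tube $V_0=\phi(N\times B_\delta)$ \emph{strictly inside} the unit tube, because the straight-line homotopy lives in a chart about $y$, and near the boundary of the unit tube the model map is already near $s_0$, which is not in that chart. Consequently $f'$ does \emph{not} equal $s_0$ on $\partial V_0$; rather $f'=g_{(N,f^*v)}$ there, and $g_{(N,f^*v)}$ takes values near $y$ on $\partial V_0$. So the sentence ``$f$ equals $s_0$ along $\partial V_1$ \dots\ it now suffices to deform $f|_{M\setminus V_1}$ to the constant $s_0$'' is not correct as stated. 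The repair is standard and small: on $\overline{M\setminus V_0}$ both $f'$ and the model map land in $S^p\setminus\{y\}\cong\Real^p$ and agree on $\partial V_0\cup\partial M$, so a straight-line homotopy in $\Real^p$ rel boundary carries $f'$ to $g_{(N,f^*v)}$ on the complement (not to the constant $s_0$, which the model map only assumes outside the full unit tube). With that target corrected, your two-stage proof of Lemma A goes through, and the rest of the argument is sound.
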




\section{The construction of an example}
\label{sec2}

Consider a circle bundle $S^3 \times S^1 \to S^2 \times S^1$
obtained by multiplying the Hopf circle bundle $S^3\to S^2$ by
$S^1$. Take also the trivial circle bundle $T^4= S^1\times T^3\to
T^3$ and produce a connected sum
$$M^4=S^3\times S^1 \#_{S^1} T^4$$
of these circle bundles along small tubes consisting of the
circle fibers equipped with natural trivialization. Clearly

\begin{enumerate}
\renewcommand{\theenumi}{\Alph{enumi}}
\item $M^4$ is the total space of the circle bundle  $$p\co M^4 \to M^3= S^2\times S^1 \# T^3;$$

\item $\pi_1(M^4)=\pi_1(M^3)$. Denote this group by $\pi$;

\item \label{itemC} $B\pi= S^1\vee T^3$ and $\dim_{\mc}M^4\leq 3$.
Indeed, the classifying map $f\co  M^4\to B\pi$ can be lifted to the
proper cobounded (by $\Diam (M^4)$) map $\tildef\co \tildeM^4\to \Wi
{B\pi}$ of the universal coverings;

\item the classifying map $f\co  M^4\to B\pi$ can be defined as
the composition
$$
M^4\ \overset{p}{\longrightarrow} \ S^2\times S^1 \# T^3 \  \overset{f_1}{\longrightarrow} \
S^2\times S^1 \vee T^3 \ \overset{f_2}{\longrightarrow} \ S^1 \vee T^3,
$$
where $f_1$ is a quotient map which maps a separating sphere $S^2$
to a point, and $f_2$ is the mapping which coincides with the
projection onto the generating circle of $S^2 \times S^1$ and is
the identity on $T^3$--component.
\end{enumerate}

Let  $g\co S^1 \vee T^3 \to S^3$ be  a degree one map which maps
$S^1$ to a point. Then the following composition $J = g\circ f_2
\circ f_1\co M^3 \to S^3$ also has degree one.

\begin{thm}\label{th:3}
The mapping $f\co M^4\to B\pi$ is not homotopic into the $2$--skeleton of $B\pi$.
\end{thm}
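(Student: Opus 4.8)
The plan is to argue by contradiction. Suppose $f$ is homotopic to a map into the $2$--skeleton $(B\pi)^{(2)}$ of $B\pi=S^1\vee T^3$. Since $(B\pi)^{(2)}$ has dimension $\le 2$ and $S^3$ is $2$--connected, the restriction $g|_{(B\pi)^{(2)}}$ is null-homotopic, hence so is $g\circ f$. By the factorization of the classifying map given above, $g\circ f=g\circ f_2\circ f_1\circ p=J\circ p\co M^4\to S^3$, so $J\circ p$ would be null-homotopic. By \fullref{th:1} and \fullref{th:2} this is equivalent to the Pontryagin manifold of $J\circ p$ being framed cobordant to zero in $M^4$; choosing a regular value $y$ of $J$ outside the image of the collapsed $S^2\times S^1$--part, that Pontryagin manifold is a single fibre circle $C=p^{-1}(q)$ (with $\{q\}=J^{-1}(y)$ a point of the $T^3$--part) carrying the product framing coming from the local trivialization of $p$ near $C$. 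Thus it suffices to show that $J\circ p$ is \emph{not} null-homotopic, equivalently that this framed fibre circle is essential.

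To do this I would compute in the mapping cone $C_\phi=S^3\cup_\phi CM^4$ of $\phi=J\circ p$. First, $\phi^*\co H^3(S^3)\to H^3(M^4)$ vanishes: by the Gysin sequence of $p$ the map $p^*\co H^3(M^3)\to H^3(M^4)$ is zero because $\cup\,e\co H^1(M^3)\to H^3(M^3)$ is onto -- the Euler class $e$ is the primitive generator of the $S^2\times S^1$--summand of $H^2(M^3)$, so $a\cup e$ generates $H^3(M^3)$ for $a$ the dual $S^1$--class -- and $J$ has degree one, so $(J\circ p)^*=p^*J^*=0$ on $H^3$. Hence there is a class $u\in H^3(C_\phi;\mathbb Z/2)$ restricting to the generator of $H^3(S^3;\mathbb Z/2)$, and the goal is to show $\Sq^2 u\ne0$ in $H^5(C_\phi;\mathbb Z/2)$: this is impossible if $\phi$ is null-homotopic, since then $C_\phi\simeq S^3\vee\Sigma M^4$ and $\Sq^2 u$ would lie in $H^5(S^3;\mathbb Z/2)=0$ (Steenrod operations respect a wedge splitting). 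For the computation, observe that the mapping cone $C_p=M^3\cup_p CM^4$ of the circle bundle $p$ is the Thom space $\mathrm{Th}(\xi)$ of the associated oriented $\mathbb R^2$--bundle $\xi$ over $M^3$, with $e(\xi)=e$, and that $\alpha\co C_p\to C_\phi$, equal to $J$ on the $M^3$--part and to the identity on $CM^4$, is well-defined. Restricting $\alpha^*u$ along the zero section $M^3\hookrightarrow C_p$ yields $J^*$ of the generator of $H^3(S^3)$, i.e. a generator of $H^3(M^3)$; via the Thom isomorphism this forces $\alpha^*u=a\cdot U_\xi$ with $a\cup e$ a generator of $H^3(M^3)$. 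Since $M^3$ is a parallelizable $3$--manifold and $M^4$ is a circle bundle over it, $w_2(M^4)=p^*w_2(M^3)=0$, so $M^4$ is spin, which kills the indeterminacy of $\Sq^2 u$ coming from the choice of $u$. Then, using $\Sq^2 U_\xi=w_2(\xi)\,U_\xi=\bar e\,U_\xi$, $\Sq^1 U_\xi=0$ and $\Sq^2 a=0$, one obtains $\alpha^*(\Sq^2 u)=\Sq^2(\alpha^*u)=\overline{a\cup e}\cdot U_\xi\ne0$ in $H^5(C_p;\mathbb Z/2)\cong H^3(M^3;\mathbb Z/2)$, whence $\Sq^2 u\ne0$ -- the desired contradiction.

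The main obstacle is precisely this last step: identifying $C_p$ with $\mathrm{Th}(\xi)$, constructing and analyzing the comparison map $\alpha\co C_p\to C_\phi$, tracking the generator $u$ through it, and organizing the Wu-formula and Thom-isomorphism bookkeeping so that the $\mathbb Z/2$ Steenrod square is provably nonzero despite the $\Sigma M^4$ contribution to $H^*(C_\phi)$. Equivalently, this step can be phrased as the nonvanishing of the $\mathbb Z/2$--valued secondary obstruction to framed null-cobordance of the product-framed fibre circle $C$ in $M^4$ -- well-defined because $M^4$ is spin -- whose nontriviality is forced exactly by the mod $2$ Euler class $\bar e$ together with the fact that $a\cup e$ generates $H^3(M^3)$ for a suitable degree-one-type class $a$.
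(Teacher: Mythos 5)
Your proposal is correct, and at its computational core it is the same argument as the paper's: both detect that $\phi=J\circ p$ is essential by exhibiting a nonzero $\Sq^2$ in the Thom space of the associated $2$--plane bundle, via the Thom--Wu formula $\Sq^2 U_\xi=w_2(\xi)\,U_\xi=\bar e\,U_\xi$ and the fact that $\bar e\ne 0$ pairs nontrivially with a degree-$1$ class. The packaging differs, though, in a way worth noting. The paper extends $J\co M^3\to S^3$ to a map $G\co T(\xi)\to S^3$ directly (possible exactly when $J\circ p$ is nullhomotopic), and then derives the contradiction $0=G^*(\Sq^2\bar s)=\Sq^2(G^*\bar s)\ne 0$; the class $G^*\bar s$ is canonical, so there is no indeterminacy to manage. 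You instead pass to the mapping cone $C_\phi$, choose a class $u\in H^3(C_\phi;\Z_2)$ extending the generator, pull it back along the comparison map $\alpha\co C_p=T(\xi)\to C_\phi$, and use the wedge splitting $C_\phi\simeq S^3\vee\Sigma M^4$ to get the contradiction. Because $u$ is only defined modulo $\Sigma H^2(M^4;\Z_2)$, you correctly observe you must check $\Sq^2$ vanishes on that ambiguity, which is where the spin-ness of $M^4$ (i.e.\ $w_2(M^4)=p^*w_2(M^3)=0$) enters; this step is absent from, and unnecessary in, the paper's formulation. Your preliminary Gysin-sequence verification that $\phi^*=0$ on $H^3$ is also logically dispensable once one assumes $\phi$ nullhomotopic, but it makes the construction of $u$ cleanly unconditional. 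The two proofs are thus equivalent in substance; the paper's is a bit leaner because it fixes the class by a map rather than by a cohomological choice, while yours makes the obstruction-theoretic structure (secondary obstruction, well-defined by spin) more visible.
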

\begin{proof}
Let $\pi\co E \to M^3$ be a two-dimensional vector bundle associated
with the circle bundle $p\co M^4 \to M^3$. Let $E_0$ denote $E$
without  zero section $s\co M^3 \hookrightarrow E$ and $j\co M^4\hookrightarrow E_0$ be a unit circle subbundle of $E$.

The following diagram is homotopically commutative:
\[
\begin{array}{ccc}
M^4 & \stackrel {j}{\hookrightarrow} &
 E_0\\
 \Big\downarrow \vcenter{%
 \rlap{$\scriptstyle{p}$}}& &
\Big\downarrow\vcenter{%
\rlap{$\scriptstyle{\text{embedding}}$}} \\
M^3 & \stackrel {s}{\hookrightarrow}&
 E
\end{array}
\]
Obviously,  $j$ and $s$ are homotopy equivalences.

Recall that we have the Thom isomorphism (see Milnor and Stasheff \cite{MS})
$$\Phi\co  H^k(M^3;\Lambda)\to H^{k+2}(E,E_0;\Lambda)$$ defined by
$$\Phi (x)=(\pi^*x)\cup u,$$
where $\Lambda$ is a ring with unity, and $u$ denotes the Thom
class.

The Thom class $u$ has the following properties \cite{MS}:

\begin{enumerate}
\renewcommand{\labelenumi}{(\alph{enumi})}
\item \label{eq1}If $e$ is the Euler class of $E$ then  we have the Thom--Wu
formula 
$$\Phi (e)= u\cup u.$$
\item $s^*(u)=e$.
\end{enumerate}
Let
$$M_p \ = \ M^4\times I/(x\times 1 \sim p(x))$$ be the  cylinder of the map $p\co M^4 \to M^3$.
Then we have natural embeddings
$$
i_1\co M^4 \to M^4\times 0 \subset M_p \qquad\text{and}\qquad
i_2\co M^3 \to M^3\times 1 \subset M_p
$$
and a natural retraction $r\co  M_p \to M_3$.
It is easy to see that $M_p$ is just a $D^2$--bundle associated to the circle
bundle $p\co M^4 \to M^3$ and $r|_{M^4}= p $.

Recall that the Thom space $(T(E),\infty)$ is the one point
compactification of $E$. Denote $T(E)$ by $T$. Clearly, $T$
is homeomorphic to the quotient space $M_p/M^4$ and
\begin{equation}H^{*}(T,\infty;\Lambda) \cong H^*(E,E_0;\Lambda)\label{eq2}\end{equation} is a
ring isomorphism (see Milnor and Stasheff \cite{MS} for more details).

If $g \circ f \co  M^4 \to S^3$ is nullhomotopic then we
can extend the map $J\co  M_3\times 1\to S^3$ to a mapping $G\co T\to
S^3$. This means that the composition
$$M^3
\overset{i_2}{\longrightarrow} M_p
\overset{\text{quotient}}{\longrightarrow} T \overset{G}{\longrightarrow}
S^3$$ has degree $1$ and $G^*\co H^3(S^3,s_0;\Lambda)\to
H^3(T,\infty;\Lambda)$ is nontrivial.

Let $a\in H^*(E,E_0;\Lambda) $ denote a class corresponding  to
the class $G^*(\bar s)$ by isomorphism \eqref{eq2}, where $\bar s$ is a
generator of $H^3(S^3,\Lambda)$.

Let us consider the following exact sequence of pair :
$$ H^3(E,E_0;\Lambda)\overset {\xi}{\to} H^3(E;\Lambda)\overset {\psi}\to H^3(E_0;\Lambda)$$
Since $E$ is homotopy equivalent to $M^3$, we have
$H^i(E;\Lambda)=H^i(M^3;\Lambda)$. Clearly   $s^*\xi (a)= J^*
(\bar s)$. (Note that $J^* (\bar s)$ is a generator of
$H^3(M^3;\Lambda)$).

Let us note that $e \text{ mod } 2 $ is equal to the Stiefel--Whitney class
$w_2$ which is  nonzero. Indeed, the restriction  of $E$ onto the
embedded  sphere $i\co S^2\subset M^3$ is the  vector bundle
associated with the Hopf circle bundle, and so  $i^*w_2 \not = 0$.
By the Thom construction above there exists a class $z\in
H^1(M^3;\Z_2)$ such that $\Phi (z)=a$. Thus
$$s^*\xi(a)= z\cup w_2 =\{\text {generator of}\ H^3(M^3;\Z_2)\}.$$
Recall the basic properties of Steenrod squares \cite{SE,MS}:

\begin{enumerate}
\item For each $n,i$ and $Y\subset X$  there exists an additive homomorphism $$ \Sq ^i\co H^n(X,Y;\Z_2)\to H^{n+i}(X,Y;\Z_2).$$

\item If $f\co (X,Y)\to (X',Y')$ is a continuous map of pairs, then $$\Sq ^i\circ f^*=f^*\circ \Sq ^i.$$

\item If $a\in H^n(X,Y;\Z_2)$, then $\Sq ^0(a)=a$, $\Sq ^n(a)=a\cup a$ and $\Sq ^i(a)=0$ for $i>n.$

\item  We have Cartan's formula:
$$\Sq ^k(a\cup b)
=\sum_{i+j=k}\Sq ^i(a)\cup \Sq ^j(b).$$
\item $\Sq ^1=w_1\cup \co  H^{m-1}(M;\Z_2)\to H^{m}(M;\Z_2)$, where $M$ is a closed smooth manifold and  $w_1$ is the
first Stiefel--Whitney class of the tangent bundle $TM$.
This follows from the coincidence of the class $w_1$ with the first Wu class $v_1$ \cite{MS}.
It is well known that $w_1=0$ if $M$ is an orientable manifold.
\end{enumerate}

Let us show that $\Sq ^2(\Phi(z))\not =0$. Using the properties above,
it is easy to see that $\Sq ^1(z)= \Sq ^2(z)=0$. Using  the Thom--Wu
formula \eqref{eq1},  we have
\begin{align*}\Sq ^2(\Phi (z))&=\pi_*z \cup \Sq ^2(u) \\ &= \pi_*z \cup u \cup
u \\ &= \pi_*z \cup \Phi (w_2)=\Phi (z\cup w_2) \not = 0.\end{align*}
 Whence $0=G^*(\Sq ^2 (\bar s))= \Sq ^2(G^*(\bar s))\not
= 0$. This contradiction implies that the composition $g\circ f \co M^4 \to S^3$ is
not homotopic to zero and $f \co M^4 \to B\pi$ can not be deformed
into the $2$--skeleton of $B\pi$.
\end{proof}

\begin{cor}\label{cor:1}
The Pontryagin manifold $(p^{-1}(m),p^*(w))$ is not cobordant to
zero, where $(m,w)$  is any framed point of $M^3$.
\end{cor}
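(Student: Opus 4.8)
The plan is to obtain the corollary directly from \fullref{th:3} via Milnor's equivalence between nullhomotopy and framed null-cobordism. The first point is that, by the description of the classifying map in \fullref{sec2}, we have $f=f_2\circ f_1\circ p$ and $J=g\circ f_2\circ f_1$, so that
$$g\circ f=g\circ f_2\circ f_1\circ p=J\circ p\colon M^4\to S^3 .$$
As shown in the proof of \fullref{th:3}, $g\circ f$ is not homotopic to a constant; hence, by \fullref{th:2}, its Pontryagin manifold is not framed cobordant to zero. So it suffices to identify this Pontryagin manifold, up to framed cobordism, with $(p^{-1}(m),p^*w)$.

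First I would choose a regular value $m'\in S^3$ of $J$. Because $p$ is a submersion, $m'$ is then automatically a regular value of $J\circ p$, and
$$(J\circ p)^{-1}(m')=p^{-1}\bigl(J^{-1}(m')\bigr).$$
The next step is a naturality property of the Pontryagin construction with respect to the submersion $p$: the normal bundle of $p^{-1}(J^{-1}(m'))$ in $M^4$ is the $p$--pullback of the normal bundle of $J^{-1}(m')$ in $M^3$, and, since $d(J\circ p)=dJ\circ dp$, the framing $(J\circ p)^*v$ induced by $J\circ p$ is the $p$--pullback of the framing $J^*v$ induced by $J$.

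Next I would use that $J^{-1}(m')$ is a finite set of framed points of $M^3$ with signed count $\deg J=1$. Since $M^3=S^2\times S^1\#T^3$ is connected and orientable, this framed $0$--manifold is framed cobordant in $M^3$ to a single positively framed point. Every framed point $(m,w)$ of $M^3$ has sign $\pm1$; one sign reduces directly to this case, and the other reduces to it after replacing $g\circ f$ by its composite with a degree $-1$ self-map of $S^3$, which is again not nullhomotopic. Taking a framed cobordism $X\subset M^3\times[0,1]$ realizing the cobordism to a single framed point and pulling it, together with its framing, back through the submersion $p\times\mathrm{id}\colon M^4\times[0,1]\to M^3\times[0,1]$, one obtains a framed cobordism in $M^4\times[0,1]$ between $p^{-1}(J^{-1}(m'))$ with its induced framing and $(p^{-1}(m),p^*w)$. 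Combining this with the previous paragraph shows that the Pontryagin manifold of $g\circ f=J\circ p$ is framed cobordant to $(p^{-1}(m),p^*w)$, so the latter is not framed cobordant to zero.

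The main obstacle is the naturality step. One has to check that the framing induced upstairs by $J\circ p$ --- obtained, as in \fullref{sec2}, by transporting a chosen basis of $T(S^3)_{m'}$ back along the orthogonal complement of the tangent space to the preimage --- really is the $p$--pullback of the corresponding framing downstairs; this reduces to comparing the orthogonal-complement splitting with the pullback splitting of the normal bundle, which differ only by a contractible choice and hence give the same framing up to homotopy. One also has to verify that pulling a framed null-cobordism in $M^3\times[0,1]$ back through $p\times\mathrm{id}$ produces a genuine framed cobordism in $M^4\times[0,1]$, namely a circle bundle over that cobordism carrying the pulled-back framing. Both points are routine once compatible Riemannian metrics and splittings are fixed; the remaining bookkeeping with Milnor's theorems is immediate.
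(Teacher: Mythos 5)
Your argument is correct and follows the paper's proof in substance: it combines \fullref{th:3} with \fullref{th:2} via the factorization $g\circ f=J\circ p$, and then moves from the Pontryagin manifold of $g\circ f$ to an arbitrary $(p^{-1}(m),p^*w)$ by a framed cobordism in the base pulled back through the submersion $p$. You are more careful than the paper in two places where it is terse: you treat $J^{-1}(m')$ as a signed framed $0$--manifold of total count $\deg J=1$ rather than tacitly a single point, and you address the orientation of the framed point $(m,w)$; the paper glosses over both by choosing $s$ so that $J^{-1}(s)$ is a single point and then appealing to \fullref{th:1} together with the regularity of $p$, which encodes the same pullback-of-a-cobordism argument you spell out.
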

\begin{proof}
Indeed, from \fullref{th:3} and \fullref{th:2} it follows that if $s\in S^3$
is a regular point of $g\circ f\co M^4\to S^3$, then the Pontryagin
manifold $(f^{-1}(g^{-1}(s),f^*(g^*(v))$ is not cobordant to zero,
where $v$ is a framing at $s$.
Thus the Pontryagin manifold $(p^{-1}(m),p^*(w))$ for $(m,w)=(J^{-1}(s),(J^{*}(v))$ is also not cobordant to zero.
Now the statement follows from \fullref{th:1} and regularity of the map $p\co M^4\to M^3$.
\end{proof}

\section{The main theorem}

\begin{defn}
A metric space is called uniformly contractible (UC) if there
exists an increasing   function $Q\co \R_+\to \R_+$ such that each
ball of radius $r$ contracts to a point inside a ball of radius
$Q(r)$.
\end{defn}
It is well known that the universal covering of a compact
$K(\tau,1)$ space is UC (see Gromov \cite{G1} for more details).

 Denote by $\rho$ the distance function on $\Wi {B\pi}$.
\begin{lem}\label{lem:2}
Let $\tildef\co \tildeM^4\to \Wi {B\pi}$ be a lifting of a classifying
map to the universal coverings.  If $\dim_{\mc}\tildeM^4\leq 2$,
then there exists a short homotopy $\tilde F\co \tildeM^4\times I\to \Wi
{B\pi}$ of $\tildef$ such that $\tilde F(x,0)=\tildef (x)$ and $\tilde
F(x,1)$ is a through mapping $$\tilde F(x,1)\co \tildeM^4 \to P^2 \to \Wi
{B\pi},$$ where $P^2$ is a $2$--dimensional polyhedron and ``short
homotopy'' means that we have $\rho (\tildef(x),\tilde F (x,t))\leq
\mathrm{const}$ for each $x\in \tildeM^4$, $t\in I$.
\end{lem}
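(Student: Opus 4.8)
The plan is to turn the hypothesis $\dim_{\mc}\tildeM^4\le 2$ into an honest controlled factorization of $\tildef$ through a $2$--polyhedron, using that $\Wi{B\pi}$ is uniformly contractible. By \eqref{itemC} we have $B\pi=S^1\vee T^3$, which is a wedge of aspherical complexes, hence a compact $K(\pi,1)$, so $\Wi{B\pi}$ is UC; fix a corresponding function $Q$. Replacing $f$ by a homotopic simplicial map (this changes $\tildef$ only by a short homotopy, which can be prepended to $\tilde F$ at the end) we may assume $\tildef$ is $L$--Lipschitz, with $L$ a Lipschitz constant of $f$, finite since $M^4$ is compact. By the definition of macroscopic dimension there are a $2$--dimensional polyhedron $P$ and a proper, uniformly cobounded map $\phi\co\tildeM^4\to P$ with $\Diam(\phi^{-1}(p))\le\eps$ for all $p$; after a bounded simplicial approximation, subdivision and restriction to the image we may assume $P$ has bounded geometry (locally finite, simplices of diameter $\le 1$) and $\phi$ is surjective, and then uniform coboundedness yields a single constant $D$ with $\Diam(\phi^{-1}(\sigma))\le D$ for every closed simplex $\sigma$ of $P$. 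This $P$ will be the required $P^2$.

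The main step is to build a map $\psi\co P\to\Wi{B\pi}$ with $\rho(\psi\phi(x),\tildef(x))$ uniformly bounded, by induction on the skeleta of $P$. Over $P^{(0)}$: choose $x_v\in\phi^{-1}(v)$ for each vertex $v$ and set $\psi(v)=\tildef(x_v)$. Over $P^{(1)}$: for an edge $e=[v_0,v_1]$ the points $x_{v_0},x_{v_1}$ both lie in $\phi^{-1}(e)$, so $\rho(\psi(v_0),\psi(v_1))\le LD$; by uniform contractibility join them by a path of diameter $\le Q(LD)$ and let $\psi|_e$ be this path. Over $P^{(2)}$: for a $2$--simplex $\sigma$ the loop $\psi(\partial\sigma)$ has diameter at most some $D'$ depending only on $L$, $D$, $Q$, hence bounds a singular disk of diameter $\le Q(D')$ by UC, which we take as $\psi|_\sigma$. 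Since $\dim P=2$ this defines $\psi$ on all of $P$, and $\Diam(\psi(\sigma))\le Q(D')$ for every simplex $\sigma$.

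Next I would estimate $\rho(\tildef(x),\psi\phi(x))$: if $\phi(x)$ lies in a simplex $\sigma$ with vertex $v$, then $x$ and $x_v$ both lie in $\phi^{-1}(\sigma)$, so $\rho(\tildef(x),\tildef(x_v))\le LD$, while $\rho(\tildef(x_v),\psi\phi(x))=\rho(\psi(v),\psi\phi(x))\le\Diam(\psi(\sigma))\le Q(D')$; hence $\rho(\tildef(x),\psi\phi(x))\le C:=LD+Q(D')$ for all $x$. Finally, two $C$--close maps from a bounded-geometry complex into a UC space are joined by a short homotopy: triangulate $\tildeM^4$ by lifting a triangulation of $M^4$ and define $\tilde F$ inductively on the skeleta, joining $\tildef(w)$ to $\psi\phi(w)$ by a path of length $\le Q(C)$ over each vertex $w$, and over a $k$--cell $c$ extending the map already defined on the $k$--sphere $\partial(c\times I)$, which has uniformly bounded diameter, across the ball $c\times I$ by means of $Q$ (possible for all $k\le\dim\tildeM^4=4$). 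Since the triangulation of $\tildeM^4$ has bounded geometry and $L,D,Q$ are uniform, each stage moves points by at most a uniform amount, so $\rho(\tildef(x),\tilde F(x,t))\le\mathrm{const}$; by construction $\tilde F(\cdot,0)=\tildef$ and $\tilde F(\cdot,1)=\psi\circ\phi$ factors through $P^2=P$, which proves the lemma.

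The only real obstacle I anticipate is the coarse bookkeeping: arranging $P$ and the lifted triangulation of $\tildeM^4$ to have bounded geometry so that ``uniformly cobounded'' upgrades to the uniform bound $D$ on preimages of simplices, and verifying that the cell-by-cell homotopy in the last step displaces points by only a bounded amount, uniformly over the noncompact manifold $\tildeM^4$. Everything else is a routine application of the uniform contractibility of $\Wi{B\pi}$.
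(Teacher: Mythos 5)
Your proof is correct and uses essentially the same ingredients as the paper's: simplicial approximation to control preimages of simplices, uniform contractibility of $\Wi{B\pi}$, and iterated application of the UC function $Q$ in a skeleton-by-skeleton extension. The only organizational difference is that you first construct a map $\psi\co P\to\Wi{B\pi}$ and bound $\rho(\tildef,\psi\circ\phi)$, and only then build the homotopy cell-by-cell over $\tildeM^4\times I$; the paper compresses these two stages into a single controlled extension over the mapping cylinder $M_h$ of the simplicial map $h\co\tildeM^4\to P$, starting from $\tildef$ on the lower base and $\tildef\circ t_0$ on the vertices of $P$ and filling in the skeleta of $M_h$ up to dimension $5$. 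Both routes end in the same estimate and factorization through $P^2$; the coarse bookkeeping point you flag (getting the uniform bound $D$ on simplex preimages from the $\eps$-bound on point preimages after simplicial approximation) is likewise passed over quickly in the paper, where it is justified by properness of $h$ together with the lifted bounded-geometry triangulation of $\tildeM^4$.
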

\begin{proof} Let $h\co  \tildeM^4 \to P$ be a proper cobounded continuous
map to some $2$--dimen\-sion\-al polyhedron $P$. Using a simplicial
approximation of $h$, we can  suppose that $h$ is a simplicial map
between such triangulations of $\tildeM^4$ and $P$,  that the
preimage of the star of each vertex is uniformly bounded (recall
that $h$ is proper). Since $\tildef$ is a quasi-isometry, the $\tildef$--image $\tildef(h^{-1}(St(v)))$ of the preimage of the star of
each vertex $v\in P$ is bounded by some constant $d$. Let $M_h$ be
the cylinder of $h$ with natural triangulation consisting of the
triangulations of $\tildeM^4$ and $P$ and the triangulations of the simplices $\{v_0,\dots,v_k ,
h(v_k),\dots,h(v_p)\}$, where $\{v_0,\dots,v_p\}$ is a simplex in
$\tildeM^4$ with $v_0<v_1<\dots,<v_p$ \cite{S}.

Consider the map $\tildef_0\co (M_h)^0\to \Wi {B\pi}$ from $0$--skeleton
$(M_h)^0$ of $M_h$ which coincides with $\tildef$ on the lower base
of $(M_h)^0$ and with the composition $\tildef\circ t_0 $ on the
upper base of $(M_h)^0$, where $t_0\co (P)^0\to \tildeM^4$  is a section of 
$h$ defined on  the $0$--skeleton $(P)^0$ of $P$. Since $\Wi {B\pi}$ is
uniformly contractible, we can extend $\tildef_0 $ to  $M_h$ using
the function $Q$ of the definition of UC-spaces as follows:

By the construction above, $\tildef_0$--image of every two
neighbouring vertexes of  $M_h$ lies into a ball of radius $d$.
Therefore we can extend the map $\tildef_0$ to a mapping $\tildef_1\co  (M_h)^1\to \Wi {B\pi}$ such that $\rho (\tildef(x),\tildef_1(x,t))\leq d$, $x\in (\tildeM^4)^0$. The $\tildef_1$--image of the
boundary of arbitrary  2--simplex of $M_h$ lies into a ball of
radius $3d$. So we can extend $\tildef_1$ to a mapping $\tildef_2\co 
(M_h)^2\to \Wi {B\pi}$ so that $\rho (\tildef(x),\tildef_2(x,t))\leq
4Q(3d)$, $x\in (\tildeM^4)^1$. Similarly, continue $\tildef_2$ to mappings
$\tildef_3,\dots, \tildef_5$ defined on skeletons $(M_h)^3,\dots,
(M_h)^5=M_h$ respectively, so that $\rho (\tildef(x),\tildef_5(x,t))\leq c$, where $c$ is a constant.
\end{proof}

\medskip
{\bf Main Theorem}\qua {\sl $\dim_{\mc}\tildeM^4 =3 $.}
\medskip
 \begin{proof} Let $q\co \Wi
{B\pi}\to \Wi {B\pi}/(\Wi {B\pi}\setminus D^3)\cong S^3$ be a
quotient map, where $D^3$ is an embedded open $3$--dimensional
ball.

Suppose that $\dim_{\mc}\tildeM^4\leq 2$ and let $h\co  \tildeM^4 \to P$ be a
proper cobounded continuous map to some $2$--dimensional
polyhedron $P$ as in \fullref{lem:2}. It is not difficult to find a
compact smooth submanifold with boundary $W\subset \tildeM^4$ such
that $W$ contains a ball of arbitrary fixed radius $r$. Since $\tildef$ is a quasi-isometry, using \fullref{lem:2} we can choose
$r$ big enough such that $\bar D^3\subset \tildef (W)$ and $\tilde
F(\partial W\times I)\cap \bar D^3=\emptyset$, where $\tilde F$
denotes the short homotopy from \fullref{lem:2}. Thus we have a
homotopy
$$q\circ \tilde F\co  (W,\partial W)\times I\to (S^3,s_0)$$
which maps $\partial W\times I$ into the base point $s_0$.  Since
$\dim P=2$,  from \fullref{lem:2} it follows that $q\circ \tilde
F(x,1)$ is homotopic to zero. Therefore  $q\circ \tilde F(x,0)=
q\circ \tildef$ is homotopic to zero (and   $q\circ \tildef$ is
smoothly homotopic to zero   \cite{M}). Let $(s,v)$ be a framed
regular point in $S^3$ for the map $q\circ \tildef$. Then the Pontryagin
manifold $$( \tildef^{-1}\circ q^{-1} (s),\tildef^*q^*(v))$$ must be
cobordant to zero (see \fullref{th:2}). Let $(\Wi\Omega, w)$
be a framed nullcobordism which is embedded in $W\times I$
with the boundary $(\tildef^{-1}\circ q^{-1} (s),\tildef^*q^*(v))$.

Consider the covering map $\tau\co  \tildeM^4 \times I \to M^4 \times
I$. Then $\tau ( \tildef^{-1}\circ q^{-1} (s),\tildef^*q^*(v))$ is an
embedded  framed submanifold  of $M^4$ which coincides with the
Pontryagin manifold $(p^{-1}(m),p^*(\nu))$ of some framed point
$(m,\nu)\in M^3$. And $\tau (\Wi \Omega, w)$ is an immersed framed
submanifold of $M^4 \times I$. Using the Whitney Embedding Theorem
\cite{W}, we can make a small perturbation of $\tau (\Wi
\Omega, w)$ identically on the small collar of the boundary to
obtain a framed nullcobordism with the boundary
$\tau ( \tildef^{-1}\circ q^{-1} (s),\tildef^*q^*(v))$. But this is
impossible by \fullref{cor:1}.
\end{proof}

\begin{rem} By similar arguments one can prove that
$$\dim_{\mc} (\Wi {M^4\times T^p}) = p+3.$$
\end{rem}

{\bf Question}\qua Does $M^4\times T^p$ admit a PSC--metric for some $p$?

\subsubsection*{Acknowledgements} I  thank  Professor Gromov for useful discussions and attention to
this work during my visit to the IHES in December 2005. Also I thank
the  referee  for the useful remarks and S\,Maksimenko for the
help in preparation of the article.

\bibliographystyle{gtart}
\bibliography{link}

\end{document}